\newcommand{\be}{\begin{equation}}
\newcommand{\en}{\end{equation}}
\newcommand{\bea}{\begin{eqnarray}}
\newcommand{\ena}{\end{eqnarray}}
\newcommand{\beano}{\begin{eqnarray*}}
\newcommand{\enano}{\end{eqnarray*}}
\newcommand{\bee}{\begin{enumerate}}
\newcommand{\ene}{\end{enumerate}}
\newcommand{\mc}{\mathcal}
\newcommand{\A}{\mathfrak{A}}
\newcommand{\Ao}{\mathfrak{A}_0}
\newcommand{\D}{{\mc D}}
\newtheorem{defn}{Definition}[section]
\newtheorem{thm}[defn]{Theorem}
\newtheorem{prop}[defn]{Proposition}
\newtheorem{lemma}[defn]{Lemma}
\newtheorem{cor}[defn]{Corollary}
\newtheorem{example}[defn]{Example}
\newtheorem{rem}[defn]{Remark}
\def\x{\relax\ifmmode {\mbox{*}}\else*\fi}
\newcommand{\beex}{\begin{example}$\!\!${\bf }$\;$\rm }
\newcommand{\enex}{ \end{example}}
\newcommand{\berem}{\begin{rem}$\!\!${\bf }$\;$\rm }
\newcommand{\enrem}{ \end{rem}}
\newcommand{\bedefi}{\begin{defn}$\!\!${\bf }$\;$\rm }
\newcommand{\findefi}{\end{defn}}
\def\H{{\mathcal H}}
\newcommand{\ip}[2]{\langle {#1}|{#2}\rangle}
\begin{document}
\title[Extensions of representable positive linear functionals]
{Extensions of representable positive linear \\ functionals  to unitized quasi *-algebras}

\author[G. Bellomonte]{Giorgia Bellomonte}
\address{Dipartimento di Matematica e Informatica,
Universit\`a degli Studi di Palermo, Via Archirafi 34, I-90123 Palermo, Italy}

\email{bellomonte@math.unipa.it}

\subjclass{46K10, 46K70, 47L60}
\keywords{Representable linear functionals, unitized quasi *-algebras.}

\begin{abstract}
It is known that, under certain conditions, *-representability and extensibility to the unitized *-algebra of a positive linear functional, defined on a *-algebra without unit, are equivalent. In this paper, a new condition for an analogous result is given for the case of a hermitian linear functional defined on a quasi *-algebra $(\A,\Ao)$ without unit.
\end{abstract}

\maketitle

\section{Introduction}

The study of linear positive functionals on *-algebras and their *-representabi-lity has been undertaken since the $1970$s.
In \cite{Pow}, Powers  proved that, if $\omega$
is a positive linear functional on  a *-algebra $\A$ with unit, it is
possible to construct a closed strongly cyclic *-representation
$\pi$ of $\A$ on a Hilbert space $\H$, generalizing, in this way, the well known GNS construction for C$^*$-algebras to nontopological *-algebras.  But a positive
linear functional on a
*-algebra without unit does not extends automatically to a positive linear
functional on the unitization.

In the $1980$s, several authors (see \cite{Palmer} and references therein), improving \cite{Rickart},  showed that
a positive linear functional on a *-algebra $\A$ without unit can be extended to a positive linear functional $\omega^e$ on its unitization $\A^e=\A\oplus\mathbb{C}$ if, and only if, it is \emph{hermitian}, i.e., $\omega(a^*)=\overline{\omega(a)}$, $\forall a \in \A$ and \emph{Hilbert bounded}, i.e. there exists $\delta>0$ such
that $|\omega(a)|^2\leq\delta\omega(a^*a),$ for every $a \in\A$.
Hence the result of Powers applies to the extension $\omega^e$, and then the linear positive functional $\omega$ is *-representable, too.\\

Contrary to the case of *-algebras, a positive linear functional defined on a \emph{quasi} *-algebra $(\A,\Ao)$ with unit is \emph{not} automatically representable. For this reason, if $(\A,\Ao)$ has no unit, the notion of {\em extensibility}  has to be modified: a positive linear functional $\omega$ will be called \emph{extensible}  if it is the  restriction to $(\A\oplus\{0\},\Ao\oplus\{0\})$ of some \emph{representable} positive linear
functional $\omega^e$ defined on the unitization $(\A^e,\Ao^e)$.\\

Representability of a positive linear functional on a quasi *-algebra is  a topical problem, see e.g. \cite{FTT,ABT}, and it is of a certain interest in some application see, for instance,  \cite{BTT} and references therein.
   In this paper, starting from a  hermitian linear functional $\omega$ defined on a quasi *-algebra $(\A,\Ao)$ without unit,
    we study under what conditions it is possible to extend $\omega$ to a representable linear  functional,  defined on a quasi *-algebra with
unit, possibly the whole unitization of $(\A,\Ao)$.  Section 2 is devoted to recall the main definitions and
results about the subject. In Section 3, once defined the extension $\omega^e((x,\eta)):=\omega(x)+\eta\gamma$, $\gamma\in\mathbb{R}^+$, to the unitization of $(\A,\Ao)$, we give a new condition on $\omega$
to make the extension $\omega^e$ to the unitized quasi *-algebra $(\A^e,\Ao^e)$ (or to a quasi *-algebra with unit
contained in $(\A^e,\Ao^e)$) representable; this new condition is quite natural, in fact we prove that it reduces to Hilbert boundedness on $\Ao$; moreover if $(\A,\Ao)$ has a unit, then this condition is automatically fulfilled, hence, under this condition, representability of $\omega$ and its extensibility are equivalent.

\section{Definitions and known results}\label{defns}
Let $\D$ be a dense subspace of Hilbert space $\H$. We
denote by $\mathcal{L}^\dagger(\D,\H)$ the set of all (closable) linear operators $X$ such that
$D(X) = \D$, $D(X^*)\supseteq \D$. The set $\mathcal{L}^\dagger(\D,\H)$ is a partial *-algebra (see \cite{ait_book}),  i.e. it is a complex vector space with respect to the usual sum $X_1 + X_2$ and the scalar
multiplication $\lambda X$, with a conjugate linear involution  $X \to X^\dagger = X^*\upharpoonright \D$
and a distributive partial multiplication: the (weak) partial
multiplication $X_1 \Box X_2 = X_1^{\dagger *}X_2$, defined whenever $X_2$ is a weak right
multiplier of $X_1$ (equivalently, $X_1$ is a weak left multiplier of $X_2$), that is,
iff $X_2\D\subset D(X_1^{\dagger *})$ and $X_1^\dagger \D \subset  D(X_2^*)$ (we write $X_2 \in R^w(X_1)$ or $X_1\in
L^w(X_2)$). We denote with
$\mathcal{L}^\dagger(\D)$ the subspace of  $\mathcal{L}^\dagger(\D,\H)$ all closable operators $A$ in
$\H$ such that $A\D\subseteq\D$, $A^\dagger\D\subseteq\D$. This space with
this involution and the multiplication $\Box$,
is a
*-algebra and $A_1\Box A_2 \xi= A_1(A_2\xi)$ for every
$\xi\in\D$ \cite{schmud}.\\

A \emph{quasi *-algebra} is a couple $(\A,\Ao)$, where $\A$ is a vector space with
involution $^*$, $\Ao$ is a *-algebra and a vector subspace of $\A$ and $\A$ is an $\Ao$-
bimodule whose module operations and involution extend those of $\Ao$.
 A quasi*-algebra is said {\em topological} (resp. \emph{locally convex})  if $\A$ is a topological (resp. locally convex) vector space with topology $\tau$, with continuous involution $^*$,
$\Ao$ is a dense vector subspace of $\A[\tau]$ and the module operations of $\A$ and involution are separately, not necessarily jointly, continuous
bilinear mappings of $\Ao\times\A$ and $\A\times\Ao$ to $\A$,
where $\Ao$ carries the induced topology of $\A$ \cite{ ait_book,
schmud}.\\

 A
\emph{*-representation of a quasi *-algebra} $(\A,\A_0)$ is a
*-homomorphism $\pi$ of $\A$ into
$\mathcal{L}^\dag(\mathcal{D}_\pi,\mathcal{H}_\pi)$,
for some pair $(\mathcal{D}_\pi,\mathcal{H}_\pi)$ with $\mathcal{D}_\pi$ a dense subspace
 of Hilbert space $\mathcal{H}_\pi$ \cite{ait_book}, i.e. a linear map
$\pi:\A\to\mathcal{L}^\dag(\mathcal{D}_\pi,\mathcal{H}_\pi)$ such
that:\begin{itemize}
  \item[i)] $\pi(x^*)=\pi(x)^\dag$, \hspace{0,2cm}$\forall x\in \A$,
  \item[ii)]  if $x\in\A$, $a\in\A_0$, then $\pi(x)$ is a weak left multiplier of $\pi(a)$ and $\pi(x)\Box\,\pi(a)=\pi(xa)$,
  \item[iii)] $\pi(a)\in\mathcal{L}^\dag(\mathcal{D}_\pi)$, $\forall a \in\Ao$.
\end{itemize}
If $(\A,\A_0)$ has unit $e$, we assume $\pi(e)=\mathbb{I}.$\\

If $\pi$ is a *-representation of $(\A,\Ao)$ in $\D_\pi$, then $\D_\pi$ can be endowed with a
locally convex topology $t_\pi$ defined by
the family of seminorms
$x\in\D_\pi\mapsto\|\pi(x)\xi\|$,  $x\in\A$.
Put $$\D(\widetilde\pi) = \bigcap_{x\in \A}
D(\overline{\pi(x)})$$
where $\overline{\pi(x)}$ denotes the closure of the operator $\pi(x)$. Then one defines
$\widetilde\pi(x):= \overline{\pi(x)}\upharpoonright\D(\widetilde\pi)$, $x\in\A$. The *-representation $\widetilde{\pi}$ is said the \emph{closure} of $\pi$.
If $\D(\pi)=\D(\widetilde{\pi})$ and $\pi=\widetilde{\pi}$, the representation is
said to be \emph{closed}.
If $\pi$ is a *-representation of $(\A,\Ao)$ in $\D_\pi$, define $\D_ {\pi^*}=\bigcap_{x\in\A}D(\pi(x)^*)$, then the adjoint representation $\pi^*$ is such that $\pi^*(x)=\pi(x^*)^*\upharpoonright\D_{\pi^*}$, $\forall x\in\A$. If $\pi=\pi^*$, then the *-representation $\pi$ is said to be \emph{self-adjoint}.

A *-representation $\pi$ of $(\A,\Ao)$ is called
\begin{itemize}
\item \emph{cyclic} if there exists a vector $\xi\in\D_\pi$ such that $\{\pi(a)\xi; a \in \Ao\}$ is dense in $\H_\pi$;
\item \emph{strongly cyclic} if there exists a vector $\xi\in\D_\pi$ such that $\{\pi(a)\xi; a \in \Ao\}$ is dense in $\D_\pi$ with respect to the topology $t_\pi$. \end{itemize}

In \cite{ct_studia08} Trapani
gave three conditions for a linear
functional $\omega$, defined on a  quasi
*-algebra (possibly without unit), to be representable.

 A linear functional $\omega$ defined on the quasi *-algebra $(\A,\Ao)$, is said to be \emph{representable}   if
\begin{itemize}
\item[(L.1)]$\omega(a^*a) \geq 0, \quad \forall a \in \A_0$;
    \item[(L.2)]$\omega(b^*x^* a)= \overline{\omega(a^*xb)}, \quad
\forall a,b \in \A_0, \forall x \in \A$;
    \item[(L.3)]$\forall x \in \A$, there exists $\gamma_x >0$ such
    that $$|\omega(x^*a)| \leq \gamma_x \omega(a^*a)^{1/2}, \quad \forall a
    \in \A_0.$$
\end{itemize}Let us call $\lambda_\omega(a), a \in\A_0$,
the coset containing $a$ in the space
$\lambda_\omega(\A_0):=\A_0/N_\omega$, where $N_\omega=\{a\in\A_0 :
\omega(a^*a) = 0\}$ is a left-ideal of $\A_0$.  The quotient $\lambda_\omega(\A_0)$ is a
pre-Hilbert space with inner product
$\ip{\lambda_\omega(a)}{\lambda_\omega(b)} = \omega(b^*a),\,\, a,
b\in\A_0$. Let $\mathcal{H}_\omega$ be the completion of
$\lambda_\omega(\A_0)$. As proved in \cite[Theorem 3.5
]{ct_studia08}, even if the quasi
*-algebra $(\A,\A_0)$ has not unit conditions (L.1), (L.2) and
(L.3) allow a generalized GNS construction for the quasi
*-algebra $(\A,\A_0)$: there exists a
*-representation $\pi_\omega$ of  $(\A, \A_0)$  such that for any $x\in\A$   $$\omega(b^*xa)=\ip{\pi_\omega(x)\lambda_\omega(a)}{\lambda_\omega(b)},$$ $\forall a,b\in\Ao.$ \\ If $(\A,\Ao)$ has a unit $e$, $\omega$ admits a closed *-representation $\widetilde{\pi}_\omega$ of $(\A,\Ao)$, with strongly cyclic vector $\xi_\omega=\lambda_\omega(e)$ by  completing the domain.
\\

If, in particular, $\A=\A_0$ is a *-algebra without unit, every positive linear functional $\omega$ on $\A_0$
is *-\emph{representable} in the sense that there exist a subspace
$\D(\pi_\omega)\subset\H_{\pi_\omega}$, a closed *-representation
$\pi_\omega:\A\to\mathcal{L}^\dagger(\D(\pi_\omega))$  such that $\omega(b^*xa)=\ip{\pi_\omega(x)\lambda_\omega(a)}{\lambda_\omega(b)}, \forall a,b,x \in\A.$
    In this case, properties (L.1)-(L.3) are automatically  satisfied by $\omega$. Moreover, if $\A$ possesses the unit, then $\omega$ is cyclically *-representable, i.e.
     there exists
 a vector
$\xi_\omega\in\D(\pi_\omega)$ such that $\omega(x)
=\ip{\pi_\omega(x)\xi_\omega}{\xi_\omega},$ $\forall x \in\A$ and
$\xi_\omega$ is strongly cyclic. The representation $\pi_\omega$ is unique up to unitary equivalence.

\begin{defn} Let $(\A, \A_0)$ be a quasi *-algebra without unit, its {\em unitization}
is the couple $(\A^e, \A_0^e)$ where $\A^e=\A\oplus\mathbb{C}$ and
$\A_0^e=\A_0\oplus\mathbb{C}$. The partial multiplication of $\A$ is
extended to $\A^e$ as follows
$$(x,\lambda)\cdot(a,\mu)=(xa+\mu x+\lambda a,\lambda\mu),\qquad
x\in\A, \,a\in\A_0,\,\, \lambda,\mu\in\mathbb{C}$$ and the adjoint
of an element $(x,\eta)$ is
$$(x,\eta)^*=(x^*,\overline{\eta}),\qquad\,x\in\A,\,\, \eta\in\mathbb{C}.$$Together with the usual operations of the direct sum space, the previous partial multiplication and the involution make $(\A^e,\Ao^e)$ into a quasi *-algebra with unit $(0,1)\in\Ao^e$.
\end{defn}

\begin{defn}
 A  linear functional $\omega$, defined on a quasi
*-algebra $(\A,\Ao)$ without unit is called {\em extensible} if $\omega$ is equal to the restriction to \\ $(\A\oplus\{0\},\Ao\oplus\{0\})$ of some representable linear
functional $\omega^e$ defined on the unitization $(\A^e,\Ao^e)$.
\end{defn}

Before going forth, let us show that, in general, a representable positive linear functional $\omega$ defined on a quasi*-algebra $ (\A,\Ao)$ without unit is not automatically extensible. In some cases it is not even possible to extend its restriction $\omega\upharpoonright\Ao$ to the unitization of the *-algebra $\Ao$, as the in following example.

\begin{example}\label{integral}
Let $(\A,\Ao)=(L^1(\mathbb{R})\bigcap L^2(\mathbb{R}),C_0(\mathbb{R}))$. It is a quasi *-algebra without unit. Let us consider the functional $$\omega(f)=\int_{\mathbb{R}}f(x)dx,\quad f\in L^1(\mathbb{R})\bigcap L^2(\mathbb{R}).$$

It is easy to see that $\omega$ enjoys properties (L.1)-(L.3). Indeed,
as for (L.3), $\forall f\in L^1(\mathbb{R})\bigcap L^2(\mathbb{R})$ and $\forall g\in C_0(\mathbb{R})$, $$|\omega(f^*g)|=\Big{|}\int_\mathbb{R} \overline{f(x)}g(x)dx\Big{|}\leq\|f\|_2\|g\|_2=\gamma_f\omega(g^*g)^{1/2}$$

Hence $\omega$ is representable on $L^1(\mathbb{R})\bigcap L^2(\mathbb{R})$.

For $\omega$ to be extensible to the unitization $(L^1(\mathbb{R})\bigcap L^2(\mathbb{R}))\oplus\mathbb{C}$
it is necessary that it is extensible to the unitization of $C_0(\mathbb{R})$, but this does not happen;
indeed, it is not Hilbert bounded; were it so, one should have $$\|\omega\|_H=\sup\{ |\omega(g)|^2; g \in C_0(\mathbb{\mathbb{R}}), \omega(g^*g)=1\}<\infty, $$
but if we take the functions $f_n(x)\in C_0(\mathbb{R})$ with

$$f_n(x)=\left\{
        \begin{array}{ll}
          nx+n^2+1 & \hbox{if $x\in[-n-1/n,-n[$;} \\
          1 & \hbox{if $x\in[-n,n]$;} \\
          -nx+n^2+1, & \hbox{if $x\in]n,n+1/n]$;} \\
          0 & \hbox{elsewhere,}
        \end{array}
      \right.
$$
we get
$\omega(f_n)=\int_\mathbb{R} f_n(x)dx\geq 2n$, $|\omega(f_n)|^2=\Big{|}\int_\mathbb{R} f_n(x)dx\Big{|}\geq 4n^2$ and
$\omega(f^*_nf_n)=\int_\mathbb{R} |f_n(x)|^2dx\leq\int_{-2n}^{2n} dx=4n$, hence the quotient  $\frac{|\omega(g)|^2}{\omega(g^*g)}>n$.
It is thus clear that we cannot extend $\omega$ to the unitized quasi *-algebra.
\end{example}

 We will maintain the notation of \cite{ct_studia08}.

\section{Representable extensions to a unitized quasi *-algebra}\label{direct extension of omega}

  Let $(\A, \Ao)$ be a quasi *-algebra without unit and  $\omega$ be a positive linear functional  defined on $(\A, \Ao)$. Let $\omega$ be hermitian (i.e., $\omega(x^*)=\overline{\omega(x)}$, $\forall x \in \A$), then (L.2) is automatically satisfied.
Assume, in addition, that  \mbox{$\omega_0=\omega\upharpoonright\Ao$} is also \emph{Hilbert
bounded} on the
*-algebra $\Ao$; i.e., the following property holds
\begin{itemize}
\item[(HB)]there exists $\delta>0$ such
that $$|\omega_0(a)|^2\leq\delta\omega_0(a^*a),\qquad \forall a \in\Ao.$$
\end{itemize}

Then  $\omega_0$ is positive (thus property (L.1) is
satisfied).\\ The Hilbert bound $\|\omega_0\|_H$ is defined by
$$\|\omega_0\|_H = \sup\{ |\omega_0(a)|^2; a\in\Ao, \omega_0(a^*a)\leq1\};$$
if $\Ao$ has a unit $e$, then as a simple consequence of the Cauchy-Schwarz inequality,
$|\omega_0(a)|^2 \leq \omega_0(e )\omega_0(a^*a),$ $\forall a \in\Ao$;
i.e. $\omega_0$ is Hilbert bounded and $\|\omega_0\|_H = \omega_0(e)$.

  If (HB) holds true, then it is known that $\omega_0$ can be extended to $\Ao^e$; indeed, the functional $\omega^e_0$ defined on the unitized *-algebra  $\A^e_0$ by
\begin{equation}\label{extens to the unitiz *-alg}
\omega^e_0((a,\lambda))=\omega_0(a)+\lambda\gamma_0,\qquad(a,\lambda)\in\Ao^e
\end{equation} with $\gamma_0\geq\|\omega_0\|_H$ is positive. If $\xi$ is the cyclic vector
of the GNS representation of $\omega^e_0$, then
\begin{equation}\label{inequality}
 |\omega_0(a)|^2
\leq\|\xi\|^2 \omega_0(a^*a),\qquad\forall a\in\A_0
 \end{equation}
with $\|\xi\|^2=\|\omega^e_0\|_H=\omega^e_0((0,1))=\gamma_0\geq\|\omega_0\|_H,$ then
\eqref{extens to the unitiz *-alg} can be rewritten as $$\omega^e_0((a,\lambda))=\omega_0(a)+\lambda\|\xi\|^2,\qquad(a,\lambda)\in\Ao^e.$$

\begin{rem} If $\Ao$ is a C*-algebra without unit and $\vartheta$ is a positive functional on $\Ao$, then
 $\vartheta$ is automatically Hilbert bounded and therefore extensible to $\Ao^e$ (see, e.g. \cite[Corollary 2.3.13]{BR}).\end{rem}

 Assume now that $\omega$   satisfies property
 (L.3) too. Hence it is representable. Now we
extend $\omega$ to $\A^e$, for every $(x,\eta)\in\A^e$ by defining\footnote{In order to lighten the notation we do not stress the dependence of $\omega^e$ on $\gamma$; to be more precise to each $\gamma\geq\|\xi\|^2$ is associated a different extension of $\omega$.}
\begin{equation}\label{defn omega^e with cyclic vector}
\omega^e((x,\eta)):=\omega(x)+\eta\gamma
\end{equation}
with $\gamma\geq\|\xi\|^2$.\\

We want to present some conditions for the representability of
$\omega^e$. Condition (L.1) holds
also for $\omega^e$ on $\A_0^e$; indeed it suffices to observe that
$\forall\, (a,\lambda)\in\A_0^e$, by using \eqref{inequality} and hermiticity
\begin{align*}
 \omega^e((a^*,\overline{\lambda})\cdot(a,\lambda)) &=
 \omega(a^*a)+\lambda\omega(a^*)+\overline{\lambda}\omega(a)+|\lambda|^2\gamma \\
   &\geq \omega(a^*a)-2|\lambda||\omega(a)|+|\lambda|^2\gamma\\
   &\geq \omega(a^*a)-2|\lambda| \gamma^{1/2}\omega(a^*a)^{1/2}+|\lambda|^2\gamma\\
   &= (\omega(a^*a)^{1/2}-|\lambda|\gamma^{1/2} )^2\geq0.
\end{align*}

Now,  the hermiticity of $\omega$ implies that $\omega^e$ is
hermitian too. Indeed,
$$\omega^e((x^*,\bar{\eta}))=\omega(x^*)+\bar{\eta}\gamma=\overline{\omega(x)+\eta\gamma}=\overline{\omega^e((x,\eta))},\hspace{0,2cm}\forall(x,\eta)\in\A^e. $$

As for property (L.3), even if $\omega$ is representable, in general  it  is not  true for $\omega^e$,  as the following example shows.
\begin{example}\label{ex: not (L.3)}
Let us consider the quasi *-algebra $(\A,\Ao)$, where $\A=\ell_2$ is, as usual, the space of square summable complex sequences and
$\Ao$ the *-algebra of all finite sequences, i.e.:
$$\Ao:=\{\textbf{a}=(a_n): \, \exists N\in \mathbb{N}, \,\,\,a_n=0,\,\forall
n>N\},$$ with componentwise multiplication, and let us  define a  linear functional $\omega$ on $\ell_2$ by
 \begin{equation}\label{finite sums}\omega((x_n))=\sum_{n=1}^{+\infty}
x_n, \qquad (x_n)\in \ell_2.\end{equation} As we showed in \cite{BProc}, $\omega$ is representable both as functional defined on the whole quasi *-algebra and if we restrict it on the *-algebra $\Ao$. However, its extension defined as in (\ref{defn omega^e with cyclic vector}), does not enjoy property (L.3). Indeed, if we consider $\textbf{x}=(x_n)$ with $x_n=1/n$, $\forall n  \in\mathbb{N}$, we have, for $\mu\neq0$ and for all $a\in\Ao$
$$|\omega^e((\textbf{x},\eta)^*\cdot (\textbf{a},\mu))|^2=|\omega(\textbf{x}^*\textbf{a})+\mu\omega(\textbf{x}^*)+\overline{\eta}\omega(\textbf{a})+\overline{\eta}\mu\gamma|^2
=|K+\mu\sum_{n=1}^{+\infty}
1/n|^2=\infty$$ since $K=\omega(\textbf{x}^*\textbf{a})+\overline{\eta}\omega(\textbf{a})+\overline{\eta}\mu\gamma$ contains only finite sums.\end{example}

 It is then necessary to introduce a further condition in order that (L.3) holds for $\omega^e$. To do this, let us define $$p_x(\omega):=\sup\{|\omega(x^*a)|^2:a\in\Ao, \omega(a^*a)=1\}.$$

The functional  $x\mapsto p_x(\omega)^{1/2}$ is a seminorm: indeed,
\begin{enumerate}
  \item[i)] $p_x(\omega)\geq 0 \,\,\forall x\in\A$, then $p_x(\omega)^{1/2}\geq0$,
  \item[ii)]  if $x = 0$, then $p_x(\omega)^{1/2} = 0$,
  \item[iii)] $p_{\lambda x}(\omega)^{1/2}= |\lambda| p_x(\omega)^{1/2}$ $\forall x\in\A,\lambda\in\mathbb{C}$,
  \item[iv)] $p_{x+y}(\omega)^{1/2}\leq p_{x}(\omega)^{1/2}+p_{y}(\omega)^{1/2}$.
\end{enumerate}

Now we are ready to introduce a necessary and sufficient condition for  $\omega$ to be extensible:
 \begin{flushleft}
 (EHB) there exists $\zeta>0$ such that
$$ |\omega(x^*)|\leq \zeta p_x(\omega)^{1/2},\quad\forall x\in\A.$$
\end{flushleft}

\begin{rem}
If $(\mathfrak{B},\mathfrak{B}_0)$ is a quasi *-algebra with unit $e\in\mathfrak{B}_0$, then (EHB) is automatically fulfilled by a positive linear functional $\vartheta$ defined on it; indeed, for every $x\in\mathfrak{B}$ $$|\vartheta(x^*)|^2=|\vartheta(x^*e)|^2\leq \sup\{|\vartheta(x^*a)|^2:a\in\mathfrak{B}_0, \vartheta(a^*a)=1\}= p_x(\vartheta)^{1/2}.$$ Hence condition (EHB) is  necessary for the extensibility of a positive linear functional $\omega$ defined on a quasi *-algebra without unit, to the unitized quasi *-algebra; moreover, (EHB) is also a sufficient condition for the extensibility of $\omega$ as Proposition \ref{prop: estendability} below shows.\end{rem}

Condition (EHB) is a natural one; indeed, the following lemma holds.

\begin{lemma}\label{(EHB) implies HB}
Let us consider a  linear functional $\omega$ on a quasi *-algebra $(\A,\Ao)$ and suppose that it enjoys property (L.1). If $\omega$ enjoys (EHB), then it  enjoys property (HB) too. The two conditions are equivalent on $\Ao$.
\end{lemma}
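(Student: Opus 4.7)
The plan is to prove both implications by exploiting the Cauchy--Schwarz inequality associated with the positive semi\-definite sesquilinear form $(a,b)\mapsto\omega(b^*a)$ on $\Ao\times\Ao$, which property (L.1) provides, together with the hermiticity of $\omega$ that is in force throughout Section~3. The basic inequality $|\omega(a^*b)|^2\leq\omega(a^*a)\,\omega(b^*b)$ for $a,b\in\Ao$ will be used in both directions: as an upper bound on $p_a(\omega)$ to deduce (HB) from (EHB), and as a lower bound on $p_a(\omega)$ (via an explicit test element) to deduce (EHB) from (HB).

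For the forward direction (EHB $\Rightarrow$ HB), I pick any $a\in\Ao\subseteq\A$ and apply (EHB) with $x:=a$, which yields $|\omega(a^*)|\leq\zeta\,p_a(\omega)^{1/2}$. Since $p_a(\omega)$ is a supremum of $|\omega(a^*b)|^2$ over $b\in\Ao$ with $\omega(b^*b)=1$, Cauchy--Schwarz immediately gives $p_a(\omega)\leq\omega(a^*a)$. Combining these and using hermiticity $|\omega(a^*)|=|\omega(a)|$ produces (HB) with constant $\delta=\zeta^2$.

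For the converse on $\Ao$ (HB $\Rightarrow$ EHB restricted to $\Ao$), I fix $a\in\Ao$ and split into two cases. If $\omega(a^*a)>0$, I bound $p_a(\omega)$ from below by testing the supremum against the normalized element $b:=a/\omega(a^*a)^{1/2}\in\Ao$, which gives $p_a(\omega)\geq\omega(a^*a)$; then (HB) combined with hermiticity yields $|\omega(a^*)|^2=|\omega(a)|^2\leq\delta\,\omega(a^*a)\leq\delta\,p_a(\omega)$, i.e.\ (EHB) on $\Ao$ with $\zeta=\delta^{1/2}$. If instead $\omega(a^*a)=0$, Cauchy--Schwarz forces $\omega(a^*b)=0$ for every $b\in\Ao$, so $p_a(\omega)=0$, while (HB) forces $\omega(a)=0$; hence both sides of the desired inequality vanish.

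There is no real obstacle here; the only subtlety is the asymmetry between the two properties, namely that (EHB) is phrased in terms of $\omega(x^*)$ while (HB) is phrased in terms of $|\omega(a)|^2$. Bridging the two requires the identity $\omega(a^*)=\overline{\omega(a)}$, which is exactly the standing hermiticity hypothesis of Section~3; without it one would only obtain the asymmetric bound $|\omega(a^*)|^2\leq\zeta^2\,\omega(a^*a)$, which is not quite (HB) on $\Ao$.
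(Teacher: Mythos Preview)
Your proof is correct and follows essentially the same route as the paper. The paper packages the argument slightly more compactly by proving directly that $p_b(\omega)=\omega(b^*b)$ for every $b\in\Ao$ (Cauchy--Schwarz for $\leq$, the normalized test element $a=b/\omega(b^*b)^{1/2}$ for $\geq$, with the case $\omega(b^*b)=0$ handled separately), from which both implications follow at once; your version simply splits this equality into the two inequalities and uses each one in the appropriate direction, while also making explicit the role of hermiticity that the paper leaves implicit.
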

\begin{proof}
It suffices to prove that properties (HB) and (EHB) coincide on $\Ao$. Let $b\in\Ao$, then $p_b(\omega)=\sup\{|\omega(b^*a)|^2;\,\omega\in\Ao\,\,\omega(a^*a)=1\}$; by the Cauchy-Schwarz  inequality $|\omega(b^*a)|^2\leq\omega(b^*b)\omega(a^*a)$ we get that $p_b(\omega)\leq\omega(b^*b)$. More precisely, the equality holds: indeed, if $\omega(b^*b)=0$ then $p_b(\omega)=0$; other-wise take $a=b/\omega(b^*b)^{1/2}\in\Ao$; then, $\omega(a^*a)=1$ and $p_b(\omega)=\omega(b^*b).\hfill\qedhere$
\end{proof}
According the previous lemma, (EHB) extends (HB) to the whole quasi *-algebra, whence the acronym (EHB).\\

The main result of this paper then holds.
\begin{prop}\label{prop: estendability}
Let $\omega$ be a hermitian linear functional defined on a quasi *-algebra $(\A,\Ao)$ without unit, such that property (L.3)  holds.
If (EHB) holds too, then the extension $\omega^e$ to the unitized quasi *-algebra $(\A^e,\Ao^e)$ is cyclically representable, i.e. the functional $\omega$ is extensible to the unitized quasi *-algebra.
\end{prop}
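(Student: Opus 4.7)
The strategy is to verify Trapani's conditions (L.1)--(L.3) for $\omega^e$ on $(\A^e,\Ao^e)$ and then invoke the generalized GNS construction of \cite{ct_studia08}: since $(\A^e,\Ao^e)$ has unit $(0,1)$, representability of $\omega^e$ automatically yields a closed *-representation with strongly cyclic vector $\lambda_{\omega^e}((0,1))$. Positivity (L.1) on $\Ao^e$ and hermiticity of $\omega^e$ (which, as $\omega^e$ is hermitian, gives (L.2) for free) have already been checked in the preceding paragraphs under the standing requirement $\gamma\ge\|\xi\|^2$. The whole weight of the argument therefore falls on (L.3).

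The key algebraic move will be the splitting
\[
\omega^e((x,\eta)^*(a,\lambda))=\omega^e((x,0)^*(a,\lambda))+\bar\eta\,\omega^e((0,1)^*(a,\lambda)),
\]
which isolates the scalar direction. The second summand is controlled at once by Cauchy--Schwarz applied to the positive sesquilinear form induced by $\omega^e$ on $\Ao^e$, giving $|\omega^e((0,1)^*(a,\lambda))|^2 \le \gamma\,\omega^e((a,\lambda)^*(a,\lambda))$. For the first summand, (L.3) for $\omega$ and Riesz' theorem in $\h_\omega$ produce, for each $x\in\A$, a vector $w_x\in\h_\omega$ with $\omega(x^*a)=\ip{\lambda_\omega(a)}{w_x}$ and $\|w_x\|=p_x(\omega)^{1/2}$. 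Hypothesis (EHB) now reads $|\omega(x^*)|\le \zeta\,\|w_x\|$, which I plan to exploit by noting that the assignment $w_x\mapsto \omega(x^*)$ is (i) well-defined on the (dense) set $\{w_x:x\in\A\}$ — if $w_x=w_y$ then $p_{x-y}(\omega)=0$, and (EHB) forces $\omega(x^*)=\omega(y^*)$ — and (ii) conjugate-linear and bounded by $\zeta$. Riesz then delivers $\psi\in\h_\omega$ with $\|\psi\|\le\zeta$ and $\omega(x^*)=\ip{\psi}{w_x}$ for every $x\in\A$, whence
\[
\omega^e((x,0)^*(a,\lambda))=\ip{\lambda_\omega(a)+\lambda\psi}{w_x}.
\]

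A final Cauchy--Schwarz, combined with the identity $\ip{\lambda_\omega(a)}{\psi}=\omega(a)$ (read off by specializing the defining identity of $\psi$ to $x=a\in\Ao$, where $w_a=\lambda_\omega(a)$), yields by direct expansion
\[
\|\lambda_\omega(a)+\lambda\psi\|^2=\omega(a^*a)+2\,\mathrm{Re}(\bar\lambda\,\omega(a))+|\lambda|^2\|\psi\|^2 \le \omega^e((a,\lambda)^*(a,\lambda))
\]
as soon as $\gamma\ge\|\psi\|^2$; since $\|\psi\|\le\zeta$ and Lemma \ref{(EHB) implies HB} gives $\|\xi\|^2=\|\omega_0\|_H\le\zeta^2$, fixing $\gamma\ge\zeta^2$ is compatible with the standing constraint $\gamma\ge\|\xi\|^2$. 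Assembling the two estimates produces (L.3) for $\omega^e$ with constant $p_x(\omega)^{1/2}+|\eta|\gamma^{1/2}$, and the proposition follows. The main obstacle I anticipate is precisely the Riesz construction of $\psi$: one must recognize (EHB) as the exact uniform bound, in $x\in\A$, on the Riesz functionals $w_x$, and then do the norm bookkeeping so that $\|\psi\|^2\le\gamma$ is arranged simultaneously with $\gamma\ge\|\xi\|^2$ — this is what makes the square $\|\lambda_\omega(a)+\lambda\psi\|^2$ fit inside $\omega^e((a,\lambda)^*(a,\lambda))$ and delivers (L.3) cleanly.
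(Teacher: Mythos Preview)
Your argument is correct and is genuinely different from the paper's proof. The paper establishes (L.3) for $\omega^e$ by a direct brute-force expansion of $|\omega^e((x,\eta)^*\cdot(a,\mu))|^2$, bounding the nine cross terms one by one via (L.3), (HB) and (EHB), and then repackaging everything into a single constant $\Gamma_{(x,\eta)}$. Your route is more structural: the splitting along the unit isolates the term governed by Cauchy--Schwarz in $\Ao^e$, and the Riesz vector $\psi$ turns (EHB) into the single inner-product identity $\omega^e((x,0)^*(a,\lambda))=\ip{\lambda_\omega(a)+\lambda\psi}{w_x}$, from which (L.3) drops out with the clean constant $p_x(\omega)^{1/2}+|\eta|\gamma^{1/2}$ (sharper than the paper's $\Gamma_{(x,\eta)}^{1/2}$, which still carries the auxiliary $\gamma_x$). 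The paper's computation has the virtue of being completely elementary and self-contained; your approach explains \emph{why} (EHB) is exactly the right hypothesis, namely as the bound that makes $w_x\mapsto\omega(x^*)$ a bounded conjugate-linear functional on the dense set $\{w_x:x\in\A\}\supseteq\lambda_\omega(\Ao)$.

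One small refinement: your bookkeeping on $\gamma$ is over-cautious, and the claim $\|\xi\|^2=\|\omega_0\|_H$ is not quite right (the paper only has $\|\xi\|^2=\gamma_0\ge\|\omega_0\|_H$). In fact no extra constraint on $\gamma$ is needed at all. Since $\lambda_\omega(\Ao)$ is dense in $\H_\omega$ and $\ip{\lambda_\omega(a)}{\psi}=\omega(a)$ for every $a\in\Ao$, the vector $\psi$ is the (unique) Riesz representative of the bounded functional $\lambda_\omega(a)\mapsto\omega(a)$, whence $\|\psi\|^2=\|\omega_0\|_H$ exactly, not merely $\le\zeta^2$. Thus the standing hypothesis $\gamma\ge\|\xi\|^2\ge\|\omega_0\|_H=\|\psi\|^2$ already guarantees $\|\lambda_\omega(a)+\lambda\psi\|^2\le\omega^e((a,\lambda)^*(a,\lambda))$, and your proof covers every extension $\omega^e$ allowed by \eqref{defn omega^e with cyclic vector}, just as the paper's does.
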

\begin{proof} Let us consider the functional $\omega^e$ defined in (\ref{defn omega^e with cyclic vector}). Property (HB) holds for $\omega_0=\omega\upharpoonright\Ao$  by Lemma \ref{(EHB) implies HB}.
We have already observed that properties (L.1) and (L.2) hold for $\omega^e$, because of (HB) and the hermiticity of $\omega$.  It remains to verify that, if (L.3) and (EHB) hold for $\omega$, property (L.3) holds for $\omega^e$ too.
Let us estimate the squared modulus of $\omega^e((x^*,\overline{\eta})\cdot(a,\mu))$
\begin{align*}
 &   |\omega^e((x^*,\overline{\eta})\cdot(a,\mu))|^2=\\
   & = |\omega(x^*a)|^2+|\eta|^2|\omega(a)|^2+|\mu|^2|\omega(x^*)|^2+|\eta|^2|\mu|^2\gamma^2 +2\Re(|\eta|^2\overline{\mu}\gamma\omega(a))+ \\
   &\quad + 2\Re\Big{(}[\omega(x^*)+\overline{\eta}\gamma]\mu\overline{\omega(x^*a)}+\overline{\eta}\omega(a)\overline{\omega(x^*a)}+ \overline{\eta}\overline{\mu}\omega(x)\omega(a)+\eta|\mu|^2\gamma\omega(x^*)\Big{)}\\
&  \leq (\gamma_x^2+|\eta|^2\gamma)\omega(a^*a)+ \left(\frac{\zeta^2p_x(\omega) }{\gamma}+|\eta|^2\gamma\right)|\mu|^2\gamma +2|\eta|^2\gamma\Re(\overline{\mu}\omega(a))+\\
  & \quad+ 2\Big{[}\big{(}\zeta p_x(\omega)^{1/2}+|\eta|\gamma\big{)}|\mu|\gamma_x\omega(a^*a)^{1/2}+|\eta|\gamma^{1/2}\gamma_x\omega(a^*a)+\\
  &\quad+\big{(}\gamma^{1/2}\omega(a^*a)^{1/2}+|\mu|\gamma\big{)}|\eta||\mu|\zeta p_x(\omega)^{1/2}\Big{]}
\\&\leq(\gamma_x+|\eta|\gamma^{1/2})^2\omega(a^*a)+
\left(\frac{\zeta p_x(\omega)^{1/2} }{\gamma^{1/2}}+|\eta|\gamma^{1/2}\right)^2|\mu|^2\gamma +2|\eta|^2\gamma\Re(\overline{\mu}\omega(a))\\
&\quad+2\Big{[}\big{(}\zeta p_x(\omega)^{1/2}+|\eta|\gamma\big{)}\gamma_x+\gamma^{1/2}|\eta|\zeta p_x(\omega)^{1/2}+|\eta|^2\gamma^{3/2}\Big{]}|\mu|\omega(a^*a)^{1/2}
\\
&\leq \left[(\gamma_x+|\eta|\gamma^{1/2})\omega(a^*a)^{1/2}+ \left(\frac{\zeta p_x(\omega)^{1/2} }{\gamma^{1/2}}+|\eta|\gamma^{1/2}\right)|\mu|\gamma^{1/2}\right]^2\\&\quad+2|\eta|^2\gamma\Re(\overline{\mu}\omega(a)).
\end{align*}
 Put $$K_{(x,\eta)}:=\max\left\{\gamma_x+|\eta|\gamma^{1/2},\frac{\zeta p_x(\omega)^{1/2}}{\gamma^{1/2}}+|\eta|\gamma^{1/2}\right\},$$ and $$\Gamma_{(x,\eta)}:=\max\{2 (K_{(x,\eta)})^2,|\eta|^2\gamma\}.$$ Then
 \begin{align*}
 & |\omega^e((x^*,\overline{\eta})\cdot(a,\mu))|  \leq(K_{(x,\eta)})^2\Big{[}\omega(a^*a)^{1/2}+|\mu|\gamma^{1/2}\Big{]}^2+2|\eta|^2\gamma\Re(\overline{\mu}\omega(a))\\
 &\leq 2 (K_{(x,\eta)})^2\Big{[}\omega(a^*a)+|\mu|^2\gamma\Big{]}+2|\eta|^2\gamma\Re(\overline{\mu}\omega(a))\\
&\leq\Gamma_{(x,\eta)}\Big{[}\omega(a^*a)+|\mu|^2\gamma+2\Re(\overline{\mu}\omega(a))\Big{]}=\Gamma_{(x,\eta)}\omega^e((a^*,\overline{\mu})\cdot(a,\mu)).
\end{align*}
The functional $\omega^e$ is then representable on the whole $(\A^e,\A_0^e)$.
\end{proof}

\begin{cor}\label{cor:equivalence}Let $\omega$ be a hermitian linear functional defined on a quasi *-algebra $(\A,\Ao)$ without unit, such that property (EHB)  holds. Then the representability of $\omega$ is equivalent to its extensibility.
\end{cor}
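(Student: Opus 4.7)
My plan is to argue the two implications separately. The forward direction (representability $\Rightarrow$ extensibility under the standing hypotheses) is essentially a direct application of Proposition \ref{prop: estendability}, while the reverse direction (extensibility $\Rightarrow$ representability) is purely formal from the definition of extensibility.

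For the forward direction, assume $\omega$ is representable. Then by definition it satisfies (L.1)--(L.3); in particular (L.3) holds. Combined with the standing hypotheses that $\omega$ is hermitian and (EHB) holds, all assumptions of Proposition \ref{prop: estendability} are met. Consequently the extension $\omega^e$ defined on $(\A^e,\Ao^e)$ by \eqref{defn omega^e with cyclic vector} is cyclically representable, and this exhibits $\omega$ as the restriction to $(\A\oplus\{0\},\Ao\oplus\{0\})$ of a representable functional on the unitization, which is exactly the definition of extensibility.

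For the reverse direction, assume $\omega$ is extensible, so there exists a representable $\omega^e$ on $(\A^e,\Ao^e)$ whose restriction to $(\A\oplus\{0\},\Ao\oplus\{0\})$ is $\omega$. I would then verify directly that properties (L.1)--(L.3) descend to the restriction under the canonical embedding $a\mapsto (a,0)$. Property (L.1) follows from $\omega(a^*a)=\omega^e((a,0)^*\cdot (a,0))\geq 0$ for every $a\in\Ao$, using (L.1) for $\omega^e$. Property (L.2) is immediate from the analogous identity for $\omega^e$, because the multiplication and involution of $\A^e$ extend those of $\A$. Property (L.3) is obtained by specialising the representability bound for $\omega^e$ at the element $(x,0)\in\A^e$ to test elements with zero scalar component: for every $a\in\Ao$,
\[
|\omega(x^*a)|=|\omega^e((x,0)^*\cdot(a,0))|\leq \gamma_{(x,0)}\,\omega^e((a,0)^*\cdot(a,0))^{1/2}=\gamma_{(x,0)}\,\omega(a^*a)^{1/2}.
\]

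The only substantive work has already been carried out in Proposition \ref{prop: estendability}; once that result is available, the corollary reduces to unwinding definitions, and the sole point requiring a bit of care is keeping the identification $(\A,\Ao)\cong (\A\oplus\{0\},\Ao\oplus\{0\})$ clean throughout the restriction argument. I do not expect any further obstacle.
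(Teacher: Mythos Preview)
Your proposal is correct and follows exactly the line the paper intends: the forward implication is a direct application of Proposition~\ref{prop: estendability} (representability supplies (L.3), and together with hermiticity and (EHB) the proposition yields a representable $\omega^e$), while the reverse implication is the formal restriction argument you spell out. The paper states the corollary without proof, treating it as immediate from Proposition~\ref{prop: estendability} and the definition of extensibility, so your write-up simply makes explicit what the paper leaves to the reader.
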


\begin{example}
A very simple example of the situation described in Proposition \ref{prop: estendability} is obtained by considering the quasi *-algebra $(\A,C_0(\mathbb{R}))$, with $\A=\{f\in C(\mathbb{R}):\int_\mathbb{R}|f(x)|e^{-x}dx<\infty\},$ $C_0(\mathbb{R})$ the *-algebra of continuous functions with compact support and  the linear functional defined on it $\omega(f)=f(0)$.
 It is easy to prove that $\omega$ is hermitian and  that (L.1) and (L.3) hold. The functional $\omega$ enjoys also property (EHB) on $\A$. Indeed,
\begin{align*}
  p_x(\omega)&= \sup\{|\omega(f^*g)|^2:g\in\Ao, \omega(g^*g)=1\} \\
   &=   \sup\{|\overline{f(0)}g(0)|^2:g\in\Ao, |g(0)|^2=1\} =|\overline{f(0)}|^2.
\end{align*}
 It is then possible to extend $\omega$ to a positive linear functional on the unitization of $(\A,\Ao)$:
 $\omega^e((f,\eta)):=\omega(f)+\eta\gamma=f(0)+\eta\gamma$
with $\gamma\geq\|\xi\|^2\geq\|\omega_0\|_H=1$.
 \end{example}

We are now able to extend a well-known theorem for positive linear functionals on *-algebras without unit, to positive linear functionals defined on quasi *-algebras without unit, by summarizing the remarks and results given until now.

\begin{thm}\label{thm: implications and equiv main}
Let $\omega$ be a  linear functional on $(\A,\Ao)$ for which (L.1) holds and consider the following statements.
\begin{itemize}
\item[$(i)$] $\omega^e:\A^e\to \mathbb{C}$ defined by        $$\omega^e ((x, \lambda )) = \omega(x) + \gamma\lambda\,, \qquad (x,\lambda)\in\A^e$$
      is a representable linear functional on $\A^e$, for every $\gamma\geq\|\omega^e_0\|_H$;
\item[$(ii)$]  there exists a *-representation $\pi$ of $\A$  and a vector\footnote{In general, it is \emph{not} a cyclic vector for $\pi$, but only  for $\pi_{\omega^e}$ a *-representation of $\omega^e$.} $\xi\in \D(\pi_{\omega^e})$ such that $\omega(x) = \ip{\pi(x)\xi}{\xi},\,\forall x\in \A$;
\item[$(iii)$] $\omega$ is  hermitian, enjoys (HB) and for every $x\in\A$ there exists a vector $\zeta_\omega(x)\in \H_\omega$ such that
$$\qquad\qquad\qquad\qquad\quad\omega(x^*a) = \ip{\lambda_\omega(a)}{\zeta_\omega(x)},\qquad  \forall a\in\Ao.$$
Then the following implications hold $$(i)\Rightarrow (ii)\Rightarrow (iii).$$
Moreover, if there exists $\zeta>0$ such that
$$ |\omega(x^*)|\leq \zeta p_x(\omega)^{1/2},\quad\forall x\in\A$$ then the three statements $(i)$, $(ii)$ and $(iii)$ are equivalent and also equivalent to the following ones
\item[$(iv)$] $\omega$ is extensible;
\item[$(v)$] $\omega$ is hermitian and enjoys  property (L.3);
\item[$(vi)$] $\omega$ is hermitian and representable.
\end{itemize}
\end{thm}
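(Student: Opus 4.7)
The plan is to prove the unconditional chain $(i)\Rightarrow(ii)\Rightarrow(iii)$ first, then under (EHB) close the loop via $(iii)\Rightarrow(v)\Rightarrow(i)$ using Proposition \ref{prop: estendability}, and finally fold in $(iv)$ and $(vi)$ by short formal arguments: $(i)\Leftrightarrow(iv)$ from the definition of extensibility together with the fact that any extension to $\A^e$ is determined by its value at the unit, and $(v)\Leftrightarrow(vi)$ because a hermitian functional satisfying (L.1) automatically satisfies (L.2).

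For $(i)\Rightarrow(ii)$: since $(\A^e,\Ao^e)$ carries the unit $e=(0,1)$, representability of $\omega^e$ yields, via the unital GNS construction recalled in Section \ref{defns}, a closed strongly cyclic *-representation $\widetilde\pi_{\omega^e}$ with cyclic vector $\xi:=\lambda_{\omega^e}(e)\in\D(\widetilde\pi_{\omega^e})$ satisfying $\omega^e((y,\lambda))=\ip{\widetilde\pi_{\omega^e}((y,\lambda))\xi}{\xi}$. Setting $\pi(x):=\widetilde\pi_{\omega^e}((x,0))$ defines a *-representation of $\A$ and produces the formula $\omega(x)=\ip{\pi(x)\xi}{\xi}$ by restriction to $\A\oplus\{0\}$.

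For $(ii)\Rightarrow(iii)$: hermiticity follows from $\pi(x^*)=\pi(x)^\dagger$ and sesquilinearity; for (HB), Cauchy--Schwarz together with $\|\pi(a)\xi\|^2=\omega_0(a^*a)$ for $a\in\Ao$ gives $|\omega_0(a)|^2\leq\|\xi\|^2\omega_0(a^*a)$. The main delicacy is producing $\zeta_\omega(x)\in\H_\omega$: one considers the conjugate-linear functional $\lambda_\omega(a)\mapsto\omega(x^*a)$ on $\lambda_\omega(\Ao)$, which by $\omega(x^*a)=\ip{\pi(a)\xi}{\pi(x)\xi}$ is bounded by $\|\pi(x)\xi\|$ and hence extends to $\H_\omega$; Riesz then supplies the required vector. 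I expect this to be the only non-formal step, because $\zeta_\omega(x)$ must live intrinsically in $\H_\omega$ rather than in the ambient $\H_{\omega^e}$, and routing the argument through Riesz avoids having to fix an explicit isometric embedding of $\H_\omega$ into $\H_{\omega^e}$.

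Under (EHB), I would then close the loop. $(iii)\Rightarrow(v)$ is immediate from $|\omega(x^*a)|\leq\|\zeta_\omega(x)\|\,\omega(a^*a)^{1/2}$, giving (L.3) with $\gamma_x=\|\zeta_\omega(x)\|$; $(v)\Leftrightarrow(vi)$ because, under (L.1), the hermiticity relation applied to $a^*xb$ is exactly (L.2); and $(v)\Rightarrow(i)$ is Proposition \ref{prop: estendability}, which is precisely where the hypothesis (EHB) enters. Finally $(i)\Rightarrow(iv)$ is the definition of extensibility, while $(iv)\Rightarrow(v)$ follows by restriction: any representable extension $\omega^e$ on the unitized quasi *-algebra is hermitian (apply (L.2) with $a=b=e$) and satisfies (L.3), both of which specialize to $\omega$. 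This closes the full equivalence, the sole obstacle throughout being the $\zeta_\omega(x)$ construction in $(ii)\Rightarrow(iii)$.
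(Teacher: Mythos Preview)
Your proposal is correct and follows essentially the same route as the paper: the GNS restriction for $(i)\Rightarrow(ii)$, Cauchy--Schwarz plus the identity $\omega(x^*a)=\ip{\pi(a)\xi}{\pi(x)\xi}$ for $(ii)\Rightarrow(iii)$, and Proposition~\ref{prop: estendability} to close the cycle under (EHB). Your use of Riesz to place $\zeta_\omega(x)$ intrinsically in $\H_\omega$ is a cleaner version of what the paper does implicitly via the isometric embedding $\lambda_\omega(a)\mapsto\lambda_{\omega^e}((a,0))$; note only that your opening claim that $(i)\Leftrightarrow(iv)$ is immediate from the definition overstates the $(iv)\Rightarrow(i)$ direction---but you correctly supply the actual route $(iv)\Rightarrow(v)\Rightarrow(i)$ later, so the argument is complete.
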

\begin{proof}
$(i)\Rightarrow (ii)$: it suffices to consider the GNS representation of the unitized quasi *-algebra $(\A^e,\Ao^e)$ constructed (as in \cite{ct_studia08}) from
$\omega^e$, restrict it to $(\A\oplus\{0\},\Ao\oplus\{0\})$, define $\pi(x):=\pi_{\omega^e}(x,0)$, $\forall x\in\A$ and take $\xi=\xi_{\omega^e}=\lambda_{\omega^e}((0,1))\in\D(\pi_{\omega^e})$ (but , in general, $\xi\notin\D(\pi_{\omega^e}\upharpoonright\A\oplus\{0\})$).  \\
$(ii)\Rightarrow (iii)$: if $\pi$ is a *-representation as described in $(ii)$, then $\omega$ is hermitian, and $\forall a\in\Ao$
$$|\omega(a)|^2 = |\ip{\pi(a)\xi}{\xi}|^2\leq\|\xi\|^2\|\pi(a)\xi\|^2=\|\xi\|^2\ip{\pi(a^*a)\xi}{\xi}=\|\xi\|^2\omega(a^*a)$$
further $\forall x\in\A$
$$\omega(x^*a)=\ip{\pi(x^*a)\xi}{\xi}=\ip{\pi(a)\xi}{\pi(x)\xi}=\ip{\lambda_\omega(a)}{\zeta_\omega(x)},\quad \forall a\in\Ao$$ remembering that $\xi=\lambda_{\omega^e_0}((0,1))$.\\
Let from now on (EHB) hold for $\omega$.\\
$(iii)\Rightarrow (iv)$:  it follows easily that (L.3) holds for $\omega$. Then, by Proposition
 \ref{prop: estendability} we get the result.\\
 $(iv)\Rightarrow (v)$: assume that $\omega'$ is a representable linear functional which extends $\omega$ to $\A^e$.
Then, $\omega$ is hermitian and (L.3) holds for $\omega$ too.\\
$(v)\Rightarrow (i)$: by hypotheses $\|\omega^e_0\|_H<\infty$ and let
$\gamma\geq\|\omega^e_0\|_H$.
The proof of  (L.1) for $\omega^e$ is standard, (L.2) follows from hermiticity of $\omega$ and (L.3) for $\omega^e$ follows from Proposition \ref{prop: estendability}.\\
$(iv)\Rightarrow (vi)$: see Corollary \ref{cor:equivalence}.
\end{proof}

\begin{rem}\label{cyclicity, HB and self adj + boundedn of pi}
Let (HB) hold for $\omega_0=\omega\upharpoonright\Ao$.
Then, the linear functional $$F(\lambda_{\omega_0}(a))=\omega_0(a)$$
 is Riesz representable, i.e. there exists $\xi_{\omega_0}\in\lambda_{\omega_0}(\Ao)$
such that $$\omega_0(a)=\ip{\lambda_{\omega_0}(a)}{\xi_{\omega_0}}.$$
Now, $\forall b\in\Ao$, $$\omega_0(ba)=\ip{\lambda_{\omega_0}(ba)}{\xi_{\omega_0}}=\ip{\pi_{\omega_0}(b)\lambda_{\omega_0}(a)}{\xi_{\omega_0}}$$
and, on the other hand $$\omega_0(ba)=\ip{\lambda_{\omega_0}(a)}{\lambda_{\omega_0}(b^*)},$$
hence $\pi_{\omega_0}(b)^*\xi_{\omega_0}=\lambda_{\omega_0}(b^*)$; we can conclude that
$\xi_{\omega_0}\in\D(\pi^*_{\omega_0})$.\\ If $\pi_{\omega_0}$ is self-adjoint, then $\lambda_{\omega_0}(a)=\pi_{\omega_0}(a)\xi_{\omega_0}$, $a\in\Ao$. In particular, if  $\pi_{\omega_0}$ is bounded (i.e., if $\omega_0$ is admissible), then $\xi_{\omega_0}$ is a cyclic vector for $\pi_{\omega_0}$ and $\omega_0(a)=\ip{\pi_{\omega_0}(a)\xi_{\omega_0}}{\xi_{\omega_0}}$; hence $\omega_0$ is strongly cyclically representable.
Conversely, with the same argument as in $(ii)\Rightarrow (iii)$ in Theorem \ref{thm: implications and equiv main} it is clear that, if $\omega$ is cyclically representable, then it enjoys (HB). We cannot conclude it from the Theorem \ref{thm: implications and equiv main} directly, because the representation in $(ii)$ is not cyclic, in the sense that the vector $\xi$ is not cyclic for $\pi$.
\end{rem}

There are cases in which property (EHB) does not hold, as the following examples show.

\begin{example}
Let us consider both the quasi *-algebra and the linear functional of Example \ref{ex: not (L.3)}.  There,  we already observed that  $\omega$ is representable as a functional defined on the whole quasi *-algebra (hence (L.3) is satisfied see also \cite{BProc}) as well as restriction to the *-algebra $\Ao$.  The functional $\omega$ is hermitian, but it does not enjoy property (EHB).

Let us calculate, for any $\textbf{a}\in\Ao$ and for a fixed $\textbf{x}\in\ell_2$,
$$
\frac{|\omega(\textbf{x}^*\textbf{a})|^2}{\omega(\textbf{a}^*\textbf{a})}=\frac{|\sum_{n=1}^{N}
\overline{x_n}a_n|^2}{\sum_{n=1}^{N}
|a_n|^2}\leq\frac{\sum_{n=1}^{N}
|x_n|^2\sum_{n=1}^{N}
|a_n|^2}{\sum_{n=1}^{N}
|a_n|^2}=\sum_{n=1}^{N}
|x_n|^2<\infty.$$
Hence $p_\textbf{x}(\omega)\leq\|\textbf{x}\|_2^2$, but property (EHB) does not  hold. Indeed, it suffices to consider a sequence
such that $|\omega(\textbf{x}^*)|>\zeta\|\textbf{x}\|_2\geq \zeta p_\textbf{x}(\omega)^{1/2}$, for every $\zeta>0$ i.e. such that
$$\left|\sum_{n=1}^{\infty}
\overline{x_n}\right|=\left|\sum_{n=1}^{\infty}
x_n\right|>\zeta\left(\sum_{n=1}^{\infty}
|x_n|^2\right)^{1/2},\quad\forall\zeta>0.$$
An easy example of this situation is obtained by taking $\textbf{x}=\left(\frac{1}{n}\right)_{n\in\mathbb{N}}$.
Hence, the task of extending $\omega$ to the unitization of $\ell_2$ is hopeless, here.
\end{example}

\begin{example}
Let us consider the same functional and the quasi *-algebra without unit of Example \ref{integral}.
Let us consider the functions $f_n\in L^1(\mathbb{R})\bigcap L^2(\mathbb{R})$ defined as $$f_n(x)=\left\{
                                                                                                \begin{array}{ll}
                                                                                                  1/\sqrt{2n}, & \hbox{$x\in[-n,n]$;} \\
                                                                                                  0, & \hbox{otherwise.}
                                                                                                \end{array}
                                                                                              \right.
$$ They have unit norm, but
$$ |\omega(f_n^*)|=\left|\int_{-n}^n1/\sqrt{2n}\,\,dx\right|=\frac{1}{\sqrt{2n}}2n=\sqrt{2n}\to \infty,\mbox{ as } n\to\infty.$$
Then, property (EHB) does not hold. We already noted in Example \ref{integral} that this functional is not extensible. \end{example}

If $\omega$ is hermitian and properties (HB) and (L.3) hold but condition (EHB) is not fulfilled on the  whole $\A$, it is possible to extend $\omega$ to a quasi *-algebra with unit  $(\D(\omega^e),\A_0^e)$ with  $\D(\omega^e)\subset\A^e$. For this, let us  restrict the linear functional $\omega^e$ defined in \eqref{defn omega^e with cyclic vector} to the vector space
\begin{align*}
   \D(\omega^e)\hspace{-0,3cm}&:= \hspace{-0,2cm}\{(x,\eta)\in\A^e:\,\exists\, \delta_{(x,\eta)}
>0 \mbox{ s.t. } \\
   & |\omega^e((x,\eta)^*\cdot(a,\mu))|\leq\delta_{(x,\eta)}\omega^e((a,\mu)^*\cdot(a,\mu))^{1/2},\,\, \forall(a,\mu)\in\A_0^e\}.
\end{align*}

We notice that
  $\D(\omega^e)$ contains
$\Ao^e$ and $\omega^e_0$ satisfies  (HB) since $\omega^e_0=\omega^e\upharpoonright\Ao^e$ is a positive linear functional on a
*-algebra (with unit) and hence it satisfies the Cauchy-Schwarz inequality.
 As for the algebraic structure of
$\D(\omega^e)$, it is in general a nonempty
  subspace of $\A^e$. But there are
situations where something more can be said. For instance, let us
assume that the following condition of {\em admissibility} holds:
\begin{flushleft}
 (ADM)  $\forall(b,\nu)\in\Ao^e$ there exists $\delta_{(b,\nu)}>0$ such that
$$\omega^e_0((a,\mu)^*\cdot(b,\nu)^*\cdot(b,\nu)\cdot(a,\mu))\leq\delta_{(b,\nu)}\omega^e_0((a,\mu)^*\cdot(a,\mu)),\,\, \forall(a,\mu)\in\Ao^e.$$
\end{flushleft}
\begin{rem} If $\Ao$ is a C*-algebra, then (ADM) is automatically satisfied.
\end{rem}

Let us suppose that $\D(\omega^e)$ is stable under involution.
\begin{prop}\label{D quasi *-alg} Let $\omega$ be a hermitian linear functional on a
quasi *-algebra without unit, satisfying  (HB) and (L.3). If $\omega^e_0$
satisfies (ADM) and  $\D(\omega^e)$ is stable under involution, then
 $(\D(\omega^e),\A_0^e)$ is a quasi *-algebra with unit.\end{prop}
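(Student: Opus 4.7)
The plan is to verify the four structural requirements for $(\D(\omega^e),\Ao^e)$ to be a quasi *-algebra with unit: (a) $\D(\omega^e)$ is a vector space containing $\Ao^e$, (b) $\Ao^e$ is a *-algebra with unit, (c) the involution on $\A^e$ restricts to an involution on $\D(\omega^e)$, and (d) $\D(\omega^e)$ is an $\Ao^e$-bimodule whose module operations extend those of $\Ao^e$. Points (b) and (c) are immediate from the hypotheses: $\Ao^e$ is already a *-algebra with unit $(0,1)$, and stability of $\D(\omega^e)$ under involution is part of the assumptions.

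For (a), the fact that $\D(\omega^e)$ is a vector subspace of $\A^e$ is routine: if $(x,\eta),(y,\xi)\in\D(\omega^e)$ with bounds $\delta_{(x,\eta)}$ and $\delta_{(y,\xi)}$, then $\lambda(x,\eta)+\mu(y,\xi)$ lies in $\D(\omega^e)$ with bound $|\lambda|\delta_{(x,\eta)}+|\mu|\delta_{(y,\xi)}$, by the triangle inequality. To see $\Ao^e\subseteq \D(\omega^e)$, note that $\omega^e_0=\omega^e\upharpoonright\Ao^e$ is positive on a unital *-algebra and therefore satisfies the Cauchy--Schwarz inequality: for $(b,\nu)\in\Ao^e$ and $(a,\mu)\in\Ao^e$,
\[
|\omega^e((b,\nu)^*\cdot(a,\mu))|\leq \omega^e_0((b,\nu)^*(b,\nu))^{1/2}\,\omega^e((a,\mu)^*\cdot(a,\mu))^{1/2},
\]
which gives the defining bound with $\delta_{(b,\nu)}=\omega^e_0((b,\nu)^*(b,\nu))^{1/2}$.

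The heart of the argument, and the step where I expect to have to be careful, is (d): showing that for $(x,\eta)\in\D(\omega^e)$ and $(b,\nu)\in\Ao^e$ both $(b,\nu)\cdot(x,\eta)$ and $(x,\eta)\cdot(b,\nu)$ lie in $\D(\omega^e)$. For the left action, given any $(a,\mu)\in\Ao^e$ I would set $(c,\sigma):=(b,\nu)^*\cdot(a,\mu)\in\Ao^e$ and use the defining bound for $(x,\eta)$ applied to $(c,\sigma)$:
\[
|\omega^e(((b,\nu)\cdot(x,\eta))^*\cdot(a,\mu))|=|\omega^e((x,\eta)^*\cdot(c,\sigma))|\leq\delta_{(x,\eta)}\,\omega^e((c,\sigma)^*\cdot(c,\sigma))^{1/2}.
\]
Then $(c,\sigma)^*\cdot(c,\sigma)=(a,\mu)^*\cdot(b,\nu)\cdot(b,\nu)^*\cdot(a,\mu)$, and condition (ADM) applied to the element $(b,\nu)^*\in\Ao^e$ bounds $\omega^e_0$ of this by $\delta_{(b,\nu)^*}\,\omega^e_0((a,\mu)^*(a,\mu))$. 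Chaining the two inequalities yields exactly the bound required to put $(b,\nu)\cdot(x,\eta)$ in $\D(\omega^e)$. The right action then comes for free from stability under involution: $((x,\eta)\cdot(b,\nu))^*=(b,\nu)^*\cdot(x,\eta)^*$ lies in $\D(\omega^e)$ by the left-action case (using that $(x,\eta)^*\in\D(\omega^e)$), and invoking stability under involution once more gives $(x,\eta)\cdot(b,\nu)\in\D(\omega^e)$.

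Finally, the module operations inherited from $\A^e$ clearly extend those of $\Ao^e$, and $(0,1)\in\Ao^e\subseteq\D(\omega^e)$ serves as the unit. The main obstacle in this plan is keeping the bookkeeping of (ADM) straight: one must apply it to $(b,\nu)^*$ rather than $(b,\nu)$ because of the order in which the factors appear in $(c,\sigma)^*\cdot(c,\sigma)$, which is why the hypothesis is stated for all elements of $\Ao^e$ and not just for selfadjoint ones.
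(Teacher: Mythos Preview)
Your proposal is correct and follows essentially the same approach as the paper. The paper's proof is extremely terse---it merely states that $\D(\omega^e)$ contains $\Ao^e$, is $*$-stable, and that (ADM) yields $(b,\nu)\cdot(x,\eta)\in\D(\omega^e)$---whereas you spell out the Cauchy--Schwarz argument for $\Ao^e\subseteq\D(\omega^e)$, the chaining of the defining bound with (ADM) (correctly applied to $(b,\nu)^*$), and the use of $*$-stability to obtain the right action from the left; these are exactly the details underlying the paper's one-line proof.
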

\begin{proof}The vector space $\D(\omega^e)$ contains $\A_0^e$ and is *-stable.
 Besides, by (ADM),
$(b,\nu)\cdot(x,\eta)\in\D(\omega^e)$, with
$(x,\eta)\in\D(\omega^e)$ and $(b,\nu)\in\A_0^e$.\end{proof}

Properties (L.1)-(L.3)  are hence satisfied by $\omega^e$ on
$\D(\omega^e)$ and $\omega^e_\D:=\omega^e\upharpoonright\D(\omega^e)$ is representable.\\
Let us maintain the notation of Section \ref{defns};
for every $(x,\eta)\in\D(\omega^e)$, the functional
 $(x,\eta)^{\omega^e_\D}:\lambda_{\omega^e_\D}(\A_0^e)\to\mathbb{C}$ with
$$(x,\eta)^{\omega^e_\D}(\lambda_{\omega^e_\D}(a,\mu)):=\omega^e_\D((x,\eta)^*\cdot(a,\mu)),$$
is linear and bounded.  By applying the Riesz lemma, we can prove
the following.
\begin{thm}\label{HB+L.3}Let $\omega$ be a hermitian linear functional on a
quasi *-algebra $(\A,\Ao)$ without unit, satisfying properties (HB) and (L.3).
If $(\D(\omega^e), \Ao^e)$ is a quasi
*-algebra with unit then $\omega^e_\D$ is
representable.
\end{thm}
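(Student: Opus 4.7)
The plan is to verify that the restricted functional $\omega^e_\D:=\omega^e\!\upharpoonright\!\D(\omega^e)$ satisfies conditions (L.1)--(L.3) on the quasi *-algebra $(\D(\omega^e),\Ao^e)$, and then to invoke the generalized GNS construction of \cite[Theorem 3.5]{ct_studia08}. Since by hypothesis this quasi *-algebra carries the unit $(0,1)\in\Ao^e$, that construction delivers a closed *-representation with strongly cyclic vector $\xi_{\omega^e_\D}=\lambda_{\omega^e_\D}((0,1))$, which is the content of representability.

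Of the three conditions, (L.1) and (L.2) are essentially free. For (L.1): on $\Ao^e$ the functional $\omega^e_\D$ coincides with $\omega^e_0$, and the completing-the-square computation carried out earlier in the paper (using the hermiticity of $\omega$ together with the Hilbert bound (HB)) already yielded $\omega^e((a,\lambda)^*\cdot(a,\lambda))\ge 0$ for every $(a,\lambda)\in\Ao^e$. For (L.2): the hermiticity of $\omega^e$ on all of $\A^e$, established earlier, gives
\[
\overline{\omega^e_\D((a,\mu)^*\cdot(x,\eta)\cdot(b,\nu))}=\omega^e_\D((b,\nu)^*\cdot(x,\eta)^*\cdot(a,\mu))
\]
for every $(x,\eta)\in\D(\omega^e)$ and $(a,\mu),(b,\nu)\in\Ao^e$, the products on each side making sense in $\D(\omega^e)$ precisely because $(\D(\omega^e),\Ao^e)$ is assumed to be a quasi *-algebra.

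Condition (L.3) is built into the very definition of $\D(\omega^e)$: for every $(x,\eta)\in\D(\omega^e)$ there exists $\delta_{(x,\eta)}>0$ with $|\omega^e((x,\eta)^*\cdot(a,\mu))|\le\delta_{(x,\eta)}\,\omega^e((a,\mu)^*\cdot(a,\mu))^{1/2}$ for all $(a,\mu)\in\Ao^e$, so (L.3) holds tautologically. This is exactly the input needed to turn $(x,\eta)^{\omega^e_\D}\colon\lambda_{\omega^e_\D}(a,\mu)\mapsto\omega^e_\D((x,\eta)^*\cdot(a,\mu))$ into a well-defined bounded linear functional on $\lambda_{\omega^e_\D}(\Ao^e)$, and hence, via Riesz, into a vector $\zeta_{(x,\eta)}\in\H_{\omega^e_\D}$ representing $\omega^e_\D((x,\eta)^*\cdot\,\cdot\,)$. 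The step I would expect to require the most care is the assembly of the Riesz vectors into a bona fide *-representation of $(\D(\omega^e),\Ao^e)$, i.e.\ the verification of the homomorphism identity $\pi_{\omega^e_\D}((x,\eta)\cdot(a,\mu))=\pi_{\omega^e_\D}(x,\eta)\,\Box\,\pi_{\omega^e_\D}(a,\mu)$ and of the compatibility with the involution on the appropriate domain; however, this is exactly the content of the GNS argument of \cite[Theorem 3.5]{ct_studia08}, which applies verbatim once (L.1)--(L.3) have been checked, so no new computation is required here.
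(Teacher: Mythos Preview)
Your proposal is correct and follows the same route the paper takes: the paper's argument is essentially the single sentence preceding the theorem, namely that the functional $(x,\eta)^{\omega^e_\D}$ is linear and bounded on $\lambda_{\omega^e_\D}(\Ao^e)$ (this being precisely (L.3), which is built into the definition of $\D(\omega^e)$), whence the Riesz lemma and the GNS machinery of \cite[Theorem~3.5]{ct_studia08} apply. You have simply made explicit the verification of (L.1) and (L.2), which the paper had already carried out earlier for $\omega^e$ and tacitly imports here.
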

By Proposition \ref{D quasi *-alg}, then we have the following

\begin{cor}\label{cor HB+L.3}Let $\omega$ be a hermitian linear functional on a
quasi *-algebra $(\A,\Ao)$ without unit, satisfying properties (HB) and (L.3).
If $\omega^e_0$ satisfies also (ADM)  and  $\D(\omega^e)$ is stable under involution, then
 $\omega^e_\D$ is representable on the quasi *-algebra with unit $(\D(\omega^e),\A_0^e)$.
\end{cor}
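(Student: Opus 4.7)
The corollary is a direct chaining of Proposition \ref{D quasi *-alg} and Theorem \ref{HB+L.3}: the hypotheses of the corollary match exactly those of the former, whose conclusion supplies the one missing hypothesis of the latter.

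First I would apply Proposition \ref{D quasi *-alg}. The corollary assumes that $\omega$ is hermitian on a quasi *-algebra $(\A,\Ao)$ without unit satisfying (HB) and (L.3), together with (ADM) for $\omega^e_0$ and stability of $\D(\omega^e)$ under the involution. These are precisely the hypotheses of that proposition, so its conclusion gives that $(\D(\omega^e),\Ao^e)$ is a quasi *-algebra. It has a unit, namely $(0,1)\in\Ao^e\subseteq\D(\omega^e)$, where the last inclusion is built into the very definition of $\D(\omega^e)$: for $(b,\nu)\in\Ao^e$, Cauchy--Schwarz applied to the positive functional $\omega^e_0$ on the unital *-algebra $\Ao^e$ yields the required bound with $\delta_{(b,\nu)}=\omega^e_0((b,\nu)^*\cdot(b,\nu))^{1/2}$.

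Next I would feed this structural conclusion into Theorem \ref{HB+L.3}. That theorem requires $\omega$ hermitian on $(\A,\Ao)$ satisfying (HB) and (L.3), together with the property that $(\D(\omega^e),\Ao^e)$ be a quasi *-algebra with unit; the first two items are part of our standing assumptions, while the third is precisely the output of the previous step. The theorem then yields representability of $\omega^e_\D=\omega^e\upharpoonright\D(\omega^e)$ on $(\D(\omega^e),\Ao^e)$, which is exactly the assertion of the corollary.

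The main obstacle is essentially absent: the corollary is a bookkeeping statement that replaces the abstract hypothesis ``$(\D(\omega^e),\Ao^e)$ is a quasi *-algebra with unit'' of Theorem \ref{HB+L.3} by the concretely verifiable conditions (ADM) plus involution-stability. The only small point worth keeping in mind is that properties (L.1) and (L.2) for $\omega^e_\D$ need not be re-proved here: (L.1) transfers from the positivity argument already carried out in Section 3 for $\omega^e$ on $\Ao^e$, and (L.2) follows from the hermiticity of $\omega$.
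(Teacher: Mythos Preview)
Your proposal is correct and follows exactly the paper's approach: the paper introduces the corollary with the phrase ``By Proposition \ref{D quasi *-alg}, then we have the following'' and provides no further proof, so the argument is precisely the chaining of Proposition \ref{D quasi *-alg} with Theorem \ref{HB+L.3} that you spell out. Your additional remarks on the unit $(0,1)\in\Ao^e\subseteq\D(\omega^e)$ and on (L.1)--(L.2) are accurate and already implicit in the discussion preceding Theorem \ref{HB+L.3}.
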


\begin{rem} If $\omega^e_0$ satisfies (ADM), then the GNS-representation
restricted to $\A_0^e$, constructed from $\omega^e_\D$, is
bounded.\end{rem}

 \subsection*{Acknowledgment} The author
wishes to thank Prof. A. Inoue and Prof. C. Trapani for their fruitful comments.

\end{document}